\documentclass[12pt,a4paper,reqno]{amsart}

\usepackage[margin=1in,footskip=0.25in]{geometry}
\usepackage{amsmath, amsthm, amssymb}
\usepackage{graphicx}
\usepackage{mathrsfs}

\usepackage{enumitem}

\usepackage{float}

\usepackage{booktabs}

\usepackage{hyperref}
\usepackage{xcolor}

\newtheorem{theorem}{Theorem}

\newtheorem{proposition}[theorem]{Proposition}

\date{}

\allowdisplaybreaks
\begin{document}

\title[Zero-Hopf Bifurcation of Qi hyperchaotic Systems]
{Zero-Hopf Bifurcation of Qi hyperchaotic Systems}

\author{Sonia Isabel Renteria Alva$^1$}
\address{$^1$ Instituto de Matemática Pura e Aplicada, Estrada Dona Castorina 110, Jardim Botânico, Rio de Janeiro, 22460-320, Brazil}
\email{sonia.alva@impa.br}

\author{Pedro Iván Suárez Navarro$^2$}
\address{$^2$ Instituto de Matemática Pura e Aplicada,  Estrada Dona Castorina 110, Jardim Botânico, Rio de Janeiro, 22460-320, Brazil}
\email{ivan.suarez@impa.br}

\subjclass[2020]{37G15, 34C29, 37G10}

\keywords{Hyperchaotic Qi system, zero-Hopf bifurcation,
periodic solutions, averaging theory}

\begin{abstract}
In this paper, we show a  zero–Hopf bifurcations in a four-dimensional hyperchaotic Qi system. While the hyperchaotic dynamics of this model have been extensively investigated, the existence and bifurcation of zero–Hopf equilibria have not been previously analyzed. We first characterize the equilibrium points and determine the parameter conditions under which the origin becomes a zero–Hopf equilibrium. Using second-order averaging theory, we establish the existence of up to four small amplitude periodic solutions bifurcating from the origin under suitable parameter perturbations. 
We also provide analytical expressions for the initial conditions of these solutions.
\end{abstract}

\maketitle

\section{Introduction and statement of the main results}

The study of chaotic and hyperchaotic dynamical systems has attracted considerable attention due to their rich dynamical behavior and numerous applications in physics, engineering, communications, and information security. Among the many chaotic models proposed in the literature, the Qi system introduced in \cite{qi2005analysis} constitutes an important example of a quadratic autonomous differential system exhibiting complex nonlinear dynamics. The original three-dimensional Qi system is given by
\begin{equation}\label{Qi3D}
\begin{aligned}
\dot{x} &= a(y-x)+yz,\\
\dot{y} &= cx-y-xz,\\
\dot{z} &= xy-bz,
\end{aligned}
\end{equation}
where $x$, $y$, and $z$ are state variables, and $a$, $b$, and $c$ are positive real parameters. Since its introduction, system \eqref{Qi3D} has been extensively studied from both theoretical and applied perspectives. Nevertheless, certain dynamical features remain only partially understood and continue to attract attention in the recent literature; see, for instance, \cite{zhang2025complex}.

To enrich the dynamical behavior of system \eqref{Qi3D}, several higher-dimensional extensions have been proposed. Among them, the four-dimensional hyperchaotic Qi system introduced in \cite{qi2008new,qi2009new} has received particular attention. This model preserves the quadratic structure of the original system while exhibiting hyperchaotic attractors, coexistence phenomena, synchronization properties, and other complex dynamical features.

In this paper we will study the existence of periodic orbits bifurcating from the zero-Hopf equilibria of the following hyperchaotic Qi system
\begin{equation}\label{s1}
\begin{aligned}
\dot{x} &= a(y-x)+yz,\\
\dot{y} &= b(x+y)-xz,\\
\dot{z} &= -cz-ew+xy,\\
\dot{w} &= -dw+kz+xy,
\end{aligned}
\end{equation}
where $x,y,z,w$ are state variables and $a,b,c,d,e,k\in\mathbb{R}$ are parameters. 

The hyperchaotic model \eqref{s1} was proposed by Qi et al. in \cite{qi2008new,qi2009new}, who studied its basic dynamical properties in the regime of positive parameter values. For suitable choices of the parameters, for instance $a=50$, $15.42\leq b\leq 27$, $c=13$, $d=8$, $e=33$, and $k=30$, it exhibits hyperchaotic behavior. Furthermore, the system is equivariant under the involution
$
\kappa:(x,y,z,w)\mapsto(-x,-y,z,w),
$
and has divergence
$
\operatorname{div}f=-a+b-c-d.
$
Consequently, it is dissipative whenever
$
a-b+c+d>0.
$

Bifurcations associated with multiple eigenvalue degeneracies play a fundamental role in the qualitative theory of differential equations, as they organize the local dynamics and may generate periodic, quasiperiodic, and more complex invariant sets. Among them, zero-Hopf bifurcations have attracted considerable attention in recent years \cite{kuznetsov1998elements,guckenheimer2013nonlinear}. In a four-dimensional autonomous system, a zero--Hopf equilibrium is characterized by the spectrum ${0,0,\pm i\omega}$ with $\omega>0$ and is often referred to as a zero-zero-Hopf bifurcation. Such configurations provide natural mechanisms for the emergence of periodic solutions and other invariant structures. They have been investigated in several classes of chaotic differential systems, including generalized Genesio systems \cite{DiabGuiraoVera2021}, generalized Michelson systems \cite{LlibreMakhlouf2016}, and three-dimensional chaotic systems with stable equilibria \cite{LlibreMessiasReinol2020}. More recently, attention has shifted toward hyperchaotic models, whose higher-dimensional phase spaces give rise to richer bifurcation scenarios. Representative examples include hyperchaotic Lorenz-type systems \cite{LlibreCandido2018HyperLorenz}, Lorenz--Haken hyperchaotic systems \cite{SuarezRenteria2024}, and other hyperchaotic differential systems of dimensions four and five \cite{llibre2021zero,DiabBustosLopezMartinez2023,diab2025zero}. These studies demonstrate the effectiveness of averaging theory as a powerful tool for establishing the existence of periodic solutions bifurcating from zero-Hopf singularities.

Despite the growing literature on zero–Hopf bifurcations in chaotic and hyperchaotic systems, no results appear to be available for the hyperchaotic Qi system. The main objective of this paper is to fill this gap. We first provide a complete characterization of the parameter values for which system \eqref{s1} possesses zero–Hopf equilibria. Subsequently, by means of second-order averaging theory, we establish the existence of up to four small-amplitude periodic solutions bifurcating from such equilibria. To the best of our knowledge, this constitutes the first rigorous analysis of zero–Hopf bifurcations in the hyperchaotic Qi system. Our results reveal dynamical phenomena occurring outside the classical hyperchaotic parameter regime originally considered in the literature, thereby uncovering a previously unexplored aspect of the model.

The existence of zero-Hopf equilibria requires a complete description of the equilibria of system \eqref{s1}. The next result provides such a characterization.

\begin{proposition}\label{prr1}
Assume that
$a\neq0,
$ $d\neq e,
$
and define
$\Delta=a^2+6ab+b^2$,
and
$
\Gamma_{\pm}
=
\frac{b(cd+ek)\bigl(3a+b\pm\sqrt{\Delta}\bigr)}
     {a(d-e)}$. Then system \eqref{s1} always admits the trivial equilibrium
$E_0=(0,0,0,0)$.

Moreover, whenever $\Delta\ge0$, the system possesses the nontrivial equilibria
$$
E_-^\pm=
\left(
\pm\frac{\sqrt{\Gamma_-}}{\sqrt2},
\mp\frac{a+b+\sqrt{\Delta}}
        {2\sqrt2\,b}\sqrt{\Gamma_-},
\frac{-a+b-\sqrt{\Delta}}{2},
-\frac{(c+k)(a-b+\sqrt{\Delta})}
       {2(d-e)}
\right),
$$
and
$$
E_+^\pm=
\left(
\pm\frac{\sqrt{\Gamma_+}}{\sqrt2},
\mp\frac{a+b-\sqrt{\Delta}}
        {2\sqrt2\,b}\sqrt{\Gamma_+},
\frac{-a+b+\sqrt{\Delta}}{2},
\frac{(c+k)(-a+b+\sqrt{\Delta})}
     {2(d-e)}
\right).
$$

Consequently, the system admits at most five real equilibrium points,
namely
$E_0$,$E_-^\pm$,$E_+^\pm$. 
The pair $E_-^\pm$ exists whenever $\Gamma_-\ge0$, whereas the pair
$E_+^\pm$ exists whenever $\Gamma_+\ge0$. Therefore, depending on the
signs of $\Delta$, $\Gamma_-$, and $\Gamma_+$, system \eqref{s1} may
possess one, two, three, four, or five real equilibrium points.
\end{proposition}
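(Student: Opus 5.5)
The plan is to solve the equilibrium equations of \eqref{s1} by direct elimination: first the two linear equations in $(z,w)$, then the bilinear equations in $(x,y)$. Setting the right-hand sides to zero and subtracting the fourth equation from the third removes the common term $xy$. This gives $(d-e)w=(c+k)z$, so, using $d\neq e$,
\[
w=\frac{(c+k)z}{d-e}.
\]
Substituting back into the third equation yields
\[
xy=cz+ew=\frac{(cd+ek)\,z}{d-e}.
\]
The problem therefore reduces to the first two equations together with this relation.

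\textbf{The $(x,y)$ block.} For fixed $z$, the first two equations form a homogeneous linear system in $(x,y)$ with matrix $\begin{pmatrix}-a & a+z\\ b-z & b\end{pmatrix}$.
\begin{enumerate}[label=(\roman*)]
\item If its determinant is nonzero, then $x=y=0$. Then $xy=0$ forces $(cd+ek)z=0$, and in the nondegenerate situation this gives $z=0$, hence $w=0$, which is $E_0$.
\item Nontrivial solutions require the determinant to vanish, i.e.
\[
-ab-(a+z)(b-z)=0 \iff z^2+(a-b)z-2ab=0 .
\]
Its roots are $z=\tfrac{b-a\pm\sqrt{\Delta}}{2}$, real exactly when $\Delta\ge0$. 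This recovers the third coordinates of $E_\pm^\pm$.
\end{enumerate}

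\textbf{Recovering $x$, $y$, $w$.} On this line of solutions the first equation gives $y=ax/(a+z)$. Substituting into $xy=(cd+ek)z/(d-e)$ gives
\[
x^2=\frac{(cd+ek)\,z(a+z)}{a(d-e)} .
\]
I then use the quadratic relation $z^2=(b-a)z+2ab$ to simplify $z(a+z)=b(z+2a)=\tfrac{b}{2}\bigl(3a+b\pm\sqrt{\Delta}\bigr)$, which gives exactly $x^2=\Gamma_\pm/2$. For $y$, I rationalize $2a/(a+b\pm\sqrt{\Delta})$ using $(a+b)^2-\Delta=-4ab$. This gives $y=\mp\frac{a+b\mp\sqrt{\Delta}}{2b}\,x$ (with the sign conventions of the statement), and $w$ follows from $w=(c+k)z/(d-e)$. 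Real points exist exactly when $\Gamma_\pm\ge0$, and each admissible root contributes the pair $\pm x$, in accordance with the symmetry $\kappa$. The count of one to five equilibria then follows from the possible sign combinations of $\Delta,\Gamma_-,\Gamma_+$.

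\textbf{Main obstacle.} I expect the main difficulty to be the degenerate subcases rather than the algebra above:
\begin{itemize}
\item $a+z=0$, so that $y=ax/(a+z)$ is not available;
\item $b=0$, since $b$ appears in the denominator of the formula for $y$;
\item $cd+ek=0$, or $\Gamma_\pm=0$, where pairs merge with $E_0$ or with each other;
\item $\Delta=0$, where the two roots coincide.
\end{itemize}
For these I would note that $a+z=0$ on the quadratic forces $ab=0$. I would then treat such cases either as implicitly excluded by the formulas, which require $b\neq0$, or by checking directly that they produce no further isolated equilibria. With that, the statement "at most five" holds, and the existence conditions are exactly as stated.
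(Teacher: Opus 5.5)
Your elimination is correct, and it is exactly the ``straightforward computation'' the paper alludes to without giving details. The relations $w=(c+k)z/(d-e)$ and $xy=(cd+ek)z/(d-e)$, the quadratic $z^2+(a-b)z-2ab=0$, the reduction $z(a+z)=b(z+2a)$ giving $x^2=\Gamma_\pm/2$, and the rationalization for $y$ all check out. One small slip: as a relation, $y=-\frac{a+b\mp\sqrt{\Delta}}{2b}\,x$ at $z=\frac{-a+b\pm\sqrt{\Delta}}{2}$, so your leading $\mp$ should be a plain minus; the statement's outer $\mp$ comes only from the sign of $x$. The real gap is in your wrap-up. Two conclusions you assert are false, so no treatment of the degenerate cases can deliver them.

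First, $cd+ek=0$ is not a case where pairs merge. At $x=y=0$ the remaining equations are $\begin{pmatrix}-c&-e\\ k&-d\end{pmatrix}\binom{z}{w}=0$, whose determinant is $cd+ek$. Take for example $c=1$, $d=2$, $e=1$, $k=-2$ and any $a\neq0$. Then the whole line $\{(0,0,z,(c+k)z/(d-e))\}$ consists of equilibria. So ``at most five'' fails, and showing there are ``no further isolated equilibria'' proves a different claim. You must add $cd+ek\neq0$ to the hypotheses. You also need $b\neq0$, which the formulas require anyway. For $b=0\neq cd+ek$, your remark that $a+z=0$ forces $ab=0$, plus a direct check, leaves only $E_0$.

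Second, the step ``one to five from the sign combinations'' does not go through. Since $(3a+b)^2-\Delta=8a^2>0$, you have $3a+b\pm\sqrt{\Delta}\neq0$, so $\Gamma_\pm=0$ iff $b(cd+ek)=0$, and
\[
\Gamma_+\Gamma_-=\frac{8\,b^2(cd+ek)^2}{(d-e)^2}.
\]
Hence, for $b(cd+ek)\neq0$, the signs of $\Gamma_+$ and $\Gamma_-$ always agree, and no pair ever merges with $E_0$:
\begin{itemize}
\item $\Delta>0$ gives $5$ or $1$ equilibria;
\item $\Delta=0$ gives $\Gamma_+=\Gamma_-\neq0$, hence $3$ or $1$;
\item $\Delta<0$ gives $1$.
\end{itemize}
Two or four equilibria never occur. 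What the computation actually yields, under $a\neq0$ and $d\neq e$, is the corrected statement: the equilibria are finite iff $cd+ek\neq0$, and then there are $1$, $3$ or $5$ of them. The paper offers nothing beyond ``straightforward computations'', so these overclaims sit in the statement itself.
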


Proposition~\ref{prr1} follows from straightforward computations. Since system \eqref{s1} is symmetric, its nontrivial equilibria arise in symmetric pairs. We now turn to the characterization of zero-Hopf equilibria and begin with the trivial equilibrium point. The following result determines all parameter values for which the origin is a zero-Hopf equilibrium of system \eqref{s1}.

\begin{proposition}\label{prop}
The equilibrium point $E_0=(0,0,0,0)$ is a zero-Hopf equilibrium of
system \eqref{s1} if and only if
\[
a=b=0,
\qquad
d=-c,
\qquad
\omega^2=ek-c^2>0.
\]
\end{proposition}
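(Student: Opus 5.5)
The plan is to compute the Jacobian of system \eqref{s1} at the origin and read off the conditions from its characteristic polynomial. Since all nonlinear terms are quadratic, the linearization at $E_0$ is
\[
J_0=\begin{pmatrix} -a & a & 0 & 0\\ b & b & 0 & 0\\ 0&0&-c&-e\\ 0&0&k&-d\end{pmatrix}.
\]
This matrix is block diagonal, so its characteristic polynomial factors as $p(\lambda)=p_1(\lambda)p_2(\lambda)$. Here
\[
p_1(\lambda)=\lambda^2+(a-b)\lambda-2ab, \qquad p_2(\lambda)=\lambda^2+(c+d)\lambda+cd+ek.
\]
By definition, $E_0$ is a zero-Hopf equilibrium exactly when the spectrum is $\{0,0,\pm i\omega\}$ with $\omega>0$. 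Equivalently,
\[
p(\lambda)=\lambda^2(\lambda^2+\omega^2).
\]

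Next I match the roots of each factor. Neither $p_1$ nor $p_2$ can have roots $\{0,i\omega\}$, because a real quadratic with a nonreal root also has its conjugate as a root. So one factor must equal $\lambda^2$ and the other must equal $\lambda^2+\omega^2$. This gives two cases.

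In the first case, $p_1=\lambda^2$. This means $a-b=0$ and $ab=0$, hence $a=b=0$. Then $p_2=\lambda^2+\omega^2$ requires $d=-c$ and $cd+ek=\omega^2$. Substituting $d=-c$ turns the last condition into $\omega^2=ek-c^2>0$.

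In the second case, $p_1=\lambda^2+\omega^2$. This needs $a=b$ and $-2ab=-2a^2=\omega^2>0$, which is impossible for real $a$. So this case is excluded. This exclusion is the only point needing care; everything else is coefficient comparison.

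Finally, for the converse, substituting $a=b=0$ and $d=-c$ back in gives $p(\lambda)=\lambda^2(\lambda^2+ek-c^2)$. With $ek-c^2=\omega^2>0$, its roots are $0,0,\pm i\omega$, which completes the equivalence.
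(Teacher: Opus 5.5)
Your proof is correct and follows the paper's route: linearize at $E_0$, compute the characteristic polynomial, and require it to equal $\lambda^2(\lambda^2+\omega^2)$. The paper expands the quartic and only asserts the outcome of the coefficient comparison, whereas your block-diagonal factorization $p=p_1p_2$ and your exclusion of the case $p_1=\lambda^2+\omega^2$ (which would force $-2a^2=\omega^2>0$) actually carry out that step.
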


In this work, we restrict our attention to the zero-Hopf bifurcation at the origin. Using averaging theory, we prove the existence of periodic solutions
bifurcating from it, as stated in the following theorem.

\begin{theorem}\label{teor2}
Let $
(a, b, d) = ( \varepsilon a_{1} + \varepsilon^2 a_{2} , \, \varepsilon b_{1} + \varepsilon^2 b_{2}, \, - c + \varepsilon d_{1}+\varepsilon^{2}d_{2})$,
where $a_1$, $a_2$, $b_1$, $b_2$, $d_1$, $d_2 \in \mathbb{R}$. The system \eqref{s1} has a zero-Hopf bifurcation at the equilibrium point $E_0$. More precisely, for  $\varepsilon \neq 0$  sufficiently small parameter, if $a_2\neq0$, $
b_2\neq0$, $
c+ e\neq 0
$, $\Delta_2=a_2^2+6a_2b_2+b_2^2>0$,  $ a_2b_2(c+e)<0$, and $( a_2^2+b_2\bigl(b_2\mp \sqrt{\Delta_2}\bigr) +a_2\bigl(6b_2\pm\sqrt{\Delta_2}\bigr) ) ( a_2^3+b_2^2\bigl(-b_2\pm \sqrt{\Delta_2}\bigr) +a_2^2\bigl(3b_2\pm\sqrt{\Delta_2}\bigr) +a_2b_2(-3b_2+2d_2) )\neq 0$, then   four periodic orbits  $\varphi_k(t, \varepsilon)$, $k=1,2,3,4$  bifurcates from $E_0$. These
periodic orbits satisfy
\begin{equation}
\scriptsize
\begin{aligned}
\varphi_1(0,\varepsilon)&= \varepsilon \left(
\frac{a_2+b_2-\sqrt{\Delta_2}}{2a_2}
\sqrt{\frac{a_2(a_2+3b_2+\sqrt{\Delta_2})(c^2-ek)}
{2b_2(c+e)}},
\sqrt{\frac{a_2(a_2+3b_2+\sqrt{\Delta_2})(c^2-ek)}
{2b_2(c+e)}},0,0 \right)
+\mathcal{O}(\varepsilon^2),\\[1ex]
\varphi_2(0,\varepsilon)&=
\varepsilon \left(
-\frac{a_2+b_2-\sqrt{\Delta_2}}{2a_2}
\sqrt{\frac{a_2(a_2+3b_2+\sqrt{\Delta_2})(c^2-ek)}
{2b_2(c+e)}},
-\sqrt{\frac{a_2(a_2+3b_2+\sqrt{\Delta_2})(c^2-ek)}
{2b_2(c+e)}},0,0 \right)
+\mathcal{O}(\varepsilon^2),\\[1ex]
\varphi_3(0,\varepsilon)&=
\varepsilon \left(
\frac{a_2+b_2+\sqrt{\Delta_2}}{2a_2}
\sqrt{\frac{a_2(a_2+3b_2-\sqrt{\Delta_2})(c^2-ek)}
{2b_2(c+e)}},
\sqrt{\frac{a_2(a_2+3b_2-\sqrt{\Delta_2})(c^2-ek)}
{2b_2(c+e)}},0,0 \right)
+\mathcal{O}(\varepsilon^2),\\[1ex]
\varphi_4(0,\varepsilon)&=
\varepsilon \left(
-\frac{a_2+b_2+\sqrt{\Delta_2}}{2a_2}
\sqrt{\frac{a_2(a_2+3b_2-\sqrt{\Delta_2})(c^2-ek)}
{2b_2(c+e)}},
-\sqrt{\frac{a_2(a_2+3b_2-\sqrt{\Delta_2})(c^2-ek)}
{2b_2(c+e)}},0,0 \right)
+\mathcal{O}(\varepsilon^2).
\end{aligned}
\normalsize
\end{equation}

Moreover, $\varphi_k(t, \varepsilon)$, $k=1,2,3,4$ can be locally stable or unstable depending on the chose of the
parameters $a_2$, $b_2$, $c$, $e$ and $k$.
\end{theorem}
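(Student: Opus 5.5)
The argument follows the standard second-order averaging scheme for zero-Hopf points, in the form of Llibre--Novaes--Teixeira, which allows a vanishing first-order average.

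\textbf{Step 1: rescaling.} Substitute $(a,b,d)=(\varepsilon a_1+\varepsilon^2a_2,\ \varepsilon b_1+\varepsilon^2 b_2,\ -c+\varepsilon d_1+\varepsilon^2 d_2)$ into \eqref{s1}. Then rescale $(x,y,z,w)=\varepsilon(\bar x,\bar y,\bar z,\bar w)$. This turns the system into $\dot X=AX+\varepsilon F_1(X)+\varepsilon^2F_2(X)$. Here $A$ is the linear part at the zero-Hopf point of Proposition~\ref{prop}. Its $(x,y)$ block is zero, and its $(z,w)$ block is $\begin{pmatrix}-c&-e\\ k&c\end{pmatrix}$, with eigenvalues $\pm i\omega$, $\omega^2=ek-c^2>0$.

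\textbf{Step 2: normal form of the linear part.} Apply a linear change of variables $(z,w)\mapsto(u,v)$ that puts this block in real Jordan form $\begin{pmatrix}0&-\omega\\ \omega&0\end{pmatrix}$. For example, take $u=z$ and $v=(cz+ew)/\omega$, up to sign conventions.

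\textbf{Step 3: standard form for averaging.} Pass to cylindrical coordinates $u=r\cos\theta$, $v=r\sin\theta$, keeping $x,y$. Then take $\theta$ as the new independent variable, and expand $dx/d\theta$, $dy/d\theta$, $dr/d\theta$ up to order $\varepsilon^2$. This gives the form $X'=\varepsilon G_1(\theta,X)+\varepsilon^2G_2(\theta,X)+O(\varepsilon^3)$, which is $2\pi$-periodic in $\theta$, with $X=(x,y,r)$.

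\textbf{Step 4: first-order average.} Compute $g_1(X)=\frac1{2\pi}\int_0^{2\pi}G_1\,d\theta$. At first order the $x,y$ equations contain $a_1(y-x)$, $b_1(x+y)$, and the averages of $yz$ and $xz$. The latter two vanish, because $z$ is linear in $\cos\theta,\sin\theta$. I expect the first-order average to force nothing useful, or else to vanish identically on the relevant set. The natural choice is to take $a_1=b_1=d_1=0$ (or whatever conditions the computation demands) so that $g_1\equiv0$.

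\textbf{Step 5: second-order average.} Compute
\[
g_2(X)=\frac1{2\pi}\int_0^{2\pi}\Bigl(D_XG_1(\theta,X)\,y_1(\theta,X)+G_2(\theta,X)\Bigr)d\theta,
\qquad y_1(\theta,X)=\int_0^\theta G_1(s,X)\,ds .
\]
This step is the main computational obstacle. One must track the quadratic terms $yz$, $-xz$ and $xy$ through the oscillating $(z,w)$ block, and the averages of products such as $\cos^2\theta$ and $\sin\theta\cos\theta$ produce terms of the form $r^2$ times linear forms in $x,y$. I expect the $r$-component to factor as $r\cdot(\cdots)$. The $x$- and $y$-components should be $a_2(y-x)+\alpha r^2 y$ and $b_2(x+y)-\beta r^2 x$, with constants involving $c,e,\omega$. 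The claimed initial conditions, with $z=w=0$, i.e.\ $r=0$ at $\theta=0$, suggest the relevant zeros actually have the rotating variables vanishing to leading order. If so, I would instead average with $x,y$ as the slow variables, and equate the resulting cubic system in $(x,y)$ to zero. That system is analogous to the equilibrium equations of Proposition~\ref{prr1} with $(a,b)$ replaced by $(a_2,b_2)$.

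\textbf{Step 6: solving for the zeros.} Solving the averaged system goes exactly as in Proposition~\ref{prr1}. Set $y=\lambda x$; this gives the quadratic $a_2\lambda^2-(a_2+b_2)\lambda\ldots$ whose discriminant is $\Delta_2$. The two roots $\lambda_\pm=(a_2+b_2\mp\sqrt{\Delta_2})/(2a_2)$ then give $y^2=a_2(a_2+3b_2\pm\sqrt{\Delta_2})(c^2-ek)/(2b_2(c+e))$. The conditions $\Delta_2>0$, $a_2b_2(c+e)<0$ and $c^2-ek<0$ make both radicands positive, producing four real zeros that are symmetric under $\kappa$.

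\textbf{Step 7: nondegeneracy and conclusion.} Compute the Jacobian determinant of $g_2$ at each zero. The stated product of two factors should be exactly this determinant, up to nonzero constants, so its nonvanishing gives simple zeros. The averaging theorem then yields four periodic solutions, and undoing the rescaling by $\varepsilon$ gives the stated initial conditions. Stability follows from the eigenvalues of this Jacobian, whose signs vary with the parameters.
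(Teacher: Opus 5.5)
Your outline follows the paper's proof step for step: the same rescaling, real Jordan form and cylindrical coordinates, the choice $a_1=b_1=d_1=0$, and the second-order average. It therefore inherits the paper's genuine gap, which you half-noticed in Step~5: the zeros you end up with have $r=0$.

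Theorem~\ref{ThmAverging} requires the zero $p$ to lie in the open set $A$ on which the $\theta$-parametrised system is defined, and that set lies in $\{r>0\}$. At $r=0$, $\theta$ is not a coordinate. Moreover $\dot\theta=\omega+\varepsilon\,xy\,(\tfrac{c+e}{\omega}\cos\theta-\sin\theta)/r+O(\varepsilon^2)$, so your $G_2$ already contains a term in $x^2y^2/r$, and the $O(\varepsilon^3)$ remainder contains $1/r^2$. The paper's $F_{12}$ has the same $1/r$. ``Averaging with $x,y$ as the slow variables'' is not a fix, because you cannot discard $r$ and still use $\theta$ as time.

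Worse, the computation itself shows there is nothing periodic to find. The $x,y$-components of the second-order average are $\omega^{-1}\bigl(a_2(y-x)-(c+e)xy^2/\omega^2\bigr)$ and $\omega^{-1}\bigl(b_2(x+y)+(c+e)x^2y/\omega^2\bigr)$; they contain no $r$, contrary to your guess. These are exactly the equilibrium equations of \eqref{s1} after eliminating $z$ and $w$, which is why they matched Proposition~\ref{prr1}. Their four nonzero roots are the leading-order positions of the nontrivial equilibria $E_-^\pm$ and $E_+^\pm$. For these parameters those equilibria sit precisely at the initial points stated in the theorem.

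The $r$-component is $r\,G(x,y)$ with $G=\bigl(-d_2\omega^2+(c+e)(y^2-x^2)\bigr)/(2\omega^3)$. At the four roots, $G$ equals $-1/(4a_2b_2\omega)$ times the second factor in the nondegeneracy hypothesis, so it is nonzero by assumption. Hence, under the stated hypotheses, the averaged function has no zero with $r>0$ at all. Where $G$ does vanish, the zeros form a whole segment in $r$ and the Jacobian is singular.

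So second-order averaging yields no periodic orbit here, and the four ``orbits'' are just the constant solutions at $E_-^\pm$ and $E_+^\pm$. The paper's proof has the same defect, since it applies Theorem~\ref{ThmAverging} at $S_{1,\dots,4}=(0,\pm z_0,\pm w_0)$. Genuine periodic orbits would need a different mechanism. One possibility is a Hopf-type bifurcation at one of these equilibria, with $d_2$ tuned so that $G$ vanishes there; that requires a degenerate analysis beyond Theorem~\ref{ThmAverging}.
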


The remainder of the paper is organized as follows. In Section~\ref{sec2:averg}  we present the results on the averaging theory that we need for proving Theorem~\ref{teor2}. Section~\ref{sec:proofs} contains the proofs of Proposition~\ref{prop}, which characterizes the zero-Hopf equilibria, and Theorem~\ref{teor2}, which establishes the existence of periodic solutions bifurcating from the origin by means of second-order averaging theory.
\section{The averaging theory of first and second order} \label{sec2:averg}
In this section, we recall the averaging theory of first and second  order for studying the periodic orbits of nonlinear differential systems that will be used in the proof of Theorem~\ref{teor2}. Their proofs and further discussions may be found in \cite{buicua2004averaging, Verhulst1996, candido2017persistence}.

\begin{theorem}\label{ThmAverging}
Let $A$ be an open subset of $\mathbb{R}^{n}$ and $\varepsilon_0>0$.
Consider the differential system
\begin{equation}\label{eq:sys}
\dot{x}(t) = \varepsilon F_1(x,t) + \varepsilon^2 F_2(x,t) + \varepsilon^3 R(x,t,\varepsilon),
\end{equation}
where
$$ F_1,F_2:A\times\mathbb{R}\to\mathbb{R}^{n}, \qquad
R:A\times\mathbb{R}\times(-\varepsilon_0,\varepsilon_0) \to \mathbb{R}^{n} $$
are continuous functions, $T$-periodic in the second variable, $F_1(\cdot,t)\in C^1(A)$ for all $t\in\mathbb{R}$, $F_1$, $F_2$, $R$, and $D_xF_1$ are locally Lipschitz with respect to $x$, and $R$ is differentiable with respect to $\varepsilon$.

Define $f,g:A\to\mathbb{R}^{n}$ by
\begin{equation}\label{eq:f}
f(z) =  \frac{1}{T} \displaystyle \int_0^T F_1(z,s)\,ds, 
\end{equation}
and
\begin{equation}\label{eq:g}
g(z) = \frac{1}{T} \int_0^T \left[ D_zF_1(z,s)\cdot \displaystyle \int_0^s F_1(z,t)\,dt + F_2(z,s)
\right] ds.
\end{equation}

\begin{enumerate}
\item[(a)] Set $h=f$ if $f\neq0$, otherwise set $h=g$. If there exists $p\in A$ such that
$$ h(p)=0 \qquad\text{and}\qquad \det Dh(p)\neq0, $$
then, for $|\varepsilon|>0$ sufficiently small, there exists a $T$-periodic solution
$\varphi(\cdot;\varepsilon)$ of system \eqref{eq:sys} such that
$$ \varphi(t,\varepsilon)-p=O(\varepsilon). $$

\item[(b)] For $p$ and $\varphi$ as in (a), if the eigenvalues of $Dh(p)$ have negative real parts, then the periodic solution $\varphi$ is stable.
If one of the eigenvalues of $Dh(p)$ has positive real part, then $\varphi$ is unstable.
\end{enumerate}

If $f\neq0$, then statement (a) provides the first-order averaging; otherwise, it provides the second-order averaging.
\end{theorem}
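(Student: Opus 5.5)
The plan is to reduce the existence of $T$-periodic solutions of \eqref{eq:sys} to the zeros of a displacement map, to expand that map in powers of $\varepsilon$, and to identify its leading coefficient with $Tf$ or $Tg$ from \eqref{eq:f}--\eqref{eq:g}. For $z\in A$, let $x(t,z,\varepsilon)$ denote the solution of \eqref{eq:sys} with $x(0,z,\varepsilon)=z$. Because the right-hand side is $O(\varepsilon)$ and locally Lipschitz, this solution exists on $[0,T]$ for $z$ in a compact subset of $A$ once $|\varepsilon|$ is small. The field is $T$-periodic in $t$, so $x(\cdot,z,\varepsilon)$ is $T$-periodic exactly when
$$d(z,\varepsilon)=x(T,z,\varepsilon)-z=0 .$$

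First I expand $x$ in $\varepsilon$. Write $x=z+\varepsilon y_1(t,z)+\varepsilon^2 y_2(t,z)+O(\varepsilon^3)$ and substitute into the integral equation
$$x(t)=z+\int_0^t\bigl(\varepsilon F_1+\varepsilon^2F_2+\varepsilon^3R\bigr)(x(s),s,\varepsilon)\,ds .$$
Taylor expanding $F_1$ to first order in $x$ (here $F_1(\cdot,t)\in C^1$ and $D_xF_1$ is locally Lipschitz) gives
$$y_1(t,z)=\int_0^tF_1(z,s)\,ds,\qquad y_2(t,z)=\int_0^t\bigl[D_zF_1(z,s)\,y_1(s,z)+F_2(z,s)\bigr]ds .$$
A Gronwall estimate bounds the remainder by $C\varepsilon^3$, uniformly for $z$ in compact sets. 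Evaluating at $t=T$ yields
$$d(z,\varepsilon)=\varepsilon T f(z)+\varepsilon^2 T g(z)+O(\varepsilon^3).$$

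Next I locate a zero of $d$ near $p$. If $f\not\equiv0$, consider $d/\varepsilon=Tf(z)+O(\varepsilon)$. If $f\equiv0$, consider $d/\varepsilon^2=Tg(z)+O(\varepsilon)$. In both cases the rescaled map equals $Th(z)+\varepsilon\rho(z,\varepsilon)$ with $\rho$ bounded on a small closed ball $B$ around $p$.

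This is where the regularity hypotheses bite, and I expect it to be the main obstacle. With only Lipschitz control on $F_2$ and $R$, the remainder $\rho$ need not be $C^1$ in $z$, so the implicit function theorem is not directly available. I would instead use Brouwer degree. Since $h(p)=0$ and $\det Dh(p)\neq0$, $p$ is an isolated zero and $\deg(h,B,0)=\operatorname{sign}\det Dh(p)\neq0$. The bound $|h|\ge m>0$ on $\partial B$ together with the homotopy $Th+s\varepsilon\rho$, $s\in[0,1]$, gives $\deg(Th+\varepsilon\rho,B,0)\neq0$ for small $|\varepsilon|$. Hence there is a zero $z_\varepsilon\in B$.

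To make this quantitative, note that $|h(z_\varepsilon)|\le C|\varepsilon|$. The inverse function estimate $|h(z)|\ge c|z-p|$ near $p$ then gives $z_\varepsilon-p=O(\varepsilon)$. Since $x(t,z_\varepsilon,\varepsilon)-z_\varepsilon=O(\varepsilon)$ on $[0,T]$, setting $\varphi(t,\varepsilon)=x(t,z_\varepsilon,\varepsilon)$ gives $\varphi(t,\varepsilon)-p=O(\varepsilon)$, which is (a).

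For (b), I would pass to the Poincaré map $P(z)=x(T,z,\varepsilon)$. Assuming enough differentiability that the expansion may be differentiated in $z$ (true in the smooth polynomial setting where the result is applied), we get
$$DP(z_\varepsilon)=I+\varepsilon^kT\,Dh(p)+o(\varepsilon^k),$$
with $k=1$ or $k=2$ according as $h=f$ or $h=g$. Its eigenvalues are $1+\varepsilon^kT\lambda_j+o(\varepsilon^k)$, where $\lambda_j$ are the eigenvalues of $Dh(p)$. For $\varepsilon>0$ small, these lie inside the unit circle when all $\operatorname{Re}\lambda_j<0$, which gives stability. They have a component outside the unit circle when some $\operatorname{Re}\lambda_j>0$, which gives instability. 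When $k=1$ and $\varepsilon<0$ the signs reverse, so the statement should be read with $\varepsilon>0$, or equivalently after reversing time.
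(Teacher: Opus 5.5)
Your argument is correct. It is essentially the proof in the sources the paper cites (the paper gives no proof of its own, deferring to Buic\u{a}--Llibre and Verhulst): expand the displacement map as $d(z,\varepsilon)=\varepsilon Tf(z)+\varepsilon^2Tg(z)+O(\varepsilon^3)$ and apply a Brouwer-degree homotopy for (a), then linearize the time-$T$ Poincar\'e map for (b). Your two caveats are also well founded: (b) needs more smoothness than the stated Lipschitz hypotheses, particularly at second order, and in the first-order case the stability conclusion holds only for $\varepsilon>0$.
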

\section{Proof of results}\label{sec:proofs}

In this section, we will provide the proofs of Proposition \eqref{prop} and Theorem \eqref{teor2}.

\begin{proof}[Proof of proposition \ref{prop}] 

The Jacobian matrix of system \eqref{s1} evaluated at
$E_0=(0,0,0,0)$ is
\[
J=
\begin{pmatrix}
-a & a & 0 & 0\\
b & b & 0 & 0\\
0 & 0 & -c & -e\\
0 & 0 & k & -d
\end{pmatrix}.
\]

The characteristic polynomial  for the matrix $J$ is
\begin{equation*}\label{polzero}
\begin{aligned}
P(\lambda)
=&\,\lambda^4
+(a-b+c+d)\lambda^3  +\bigl(cd-b(c+d)+a(c+d-2b)+ek\bigr)\lambda^2\\
&+\bigl(a(cd-2b(c+d)+ek)-b(cd+ek)\bigr)\lambda -2ab(cd+ek).
\end{aligned}
\end{equation*}

In order that the equilibrium point $E_0$ be a zero-Hopf equilibrium the polynomial $P(\lambda)$ must be of the form
$\lambda^2(\lambda^2+\omega^2)$, $
\omega>0$. Then we obtain
$$
a=b=0,
\qquad
d=-c,
\qquad
ek-c^2>0,
\qquad
\omega=\sqrt{ek-c^2}.
$$

\end{proof}

\begin{proof}[Proof of Theorem \ref{teor2}]
Let  $(a, b, d) = ( \varepsilon a_{1} + \varepsilon^2 a_{2} , \, \varepsilon b_{1} + \varepsilon^2 b_{2}, \, - c + \varepsilon d_{1}+\varepsilon^{2}d_{2} )$,  where  $\varepsilon>0$ is a sufficiently  small parameter and $a_1$, $a_2$, $b_1$, $b_2$, $d_1$ and $d_2$ are real nonzero numbers. With these subsitutions, the system \eqref{s1} becomes
\begin{equation}\label{eq:sistema}
\begin{aligned}
\dot{x} &= y z-\varepsilon a_{1}(x-y)-\varepsilon^{2}a_{2}(x-y),\\
\dot{y} &= -xz+\varepsilon b_{1}(x+y)+\varepsilon^{2}b_{2}(x+y),\\
\dot{z} &= xy-cz-ew,\\
\dot{w} &= xy+kz+cw-\varepsilon d_{1}w-\varepsilon^{2}d_{2}w.
\end{aligned}
\end{equation}

Next, we rescale the variables. Let $(x,y,z,w)=( \epsilon X, \epsilon Y,  \epsilon Z,\epsilon W)$, then system \eqref{eq:sistema} is written as follows: 
\begin{equation}\label{sist-rescal}
\begin{aligned}
\dot{X} &= \varepsilon YZ +\varepsilon a_{1}(Y-X) +\varepsilon^{2}a_{2}(Y-X),\\[1ex]
\dot{Y} &= -\varepsilon XZ +\varepsilon b_{1}(X+Y) + \varepsilon^{2}b_{2}(X+Y),\\[1ex]
\dot{Z} &= -(cZ+eW) +\varepsilon XY,\\[1ex]
\dot{W} &= (kZ+cW) +\varepsilon\left(XY-d_{1}W\right) -\varepsilon^{2}d_{2}W.
\end{aligned}
\end{equation}

Now we shall write the linear part at the origin of the system \eqref{sist-rescal} when $\varepsilon = 0 $ into its real Jordan normal form, i.e., as

$$\left( \begin{array}{cccc}
0 & - \sqrt{ek - c^2} & 0 & 0 \\[2mm]
\sqrt{ek -c^2} & 0 & 0 & 0 \\[2mm]
0 & 0 & 0 & 0 \\[2mm]
0 & 0 & 0 & 0
\end{array}\right).$$
For doing that we consider the linear change
\begin{equation*}\label{eq-proof-4}
   \Big( x, y, z, w \Big) = \Bigg( W, Z, -\frac{cX+\sqrt{ek-c^{2}}\,Y}{k}, X \Bigg),
\end{equation*}
makes what we need. In these new variables and by denoting again the variables $(X, Y, Z,W) \to (x, y, z, w)$, the system \eqref{sist-rescal} is written as follows
\begin{equation}\label{sist-novo}
\begin{aligned}
\dot{x} &= -\sqrt{ek-c^{2}}\,y +\varepsilon\left(-d_{1}x+wz\right) -d_{2}x\,\varepsilon^{2}, \\[2mm]
\dot{y} &= \sqrt{ek-c^{2}}\,x +\varepsilon\frac{c d_{1}x-(c+k)wz}{\sqrt{ek-c^{2}}}
+\varepsilon^{2}\frac{c d_{2}x}{\sqrt{ek-c^{2}}}, \\[2mm]
\dot{z} &= \varepsilon\left( \frac{w\left(cx+\sqrt{ek-c^{2}}\,y\right)}{k} +b_{1}(w+z)
\right) +\varepsilon^{2} b_{2}(w+z),\\[2mm]
\dot{w} &= \varepsilon\left( -\frac{\left(cx+\sqrt{ek-c^{2}}\,y\right)z}{k}
+a_{1}(-w+z) \right) +\varepsilon^{2} a_{2}(-w+z).
\end{aligned}
\end{equation}
Now, we transform the differential system \eqref{sist-novo} to cylindrical coordinates $(r, \theta, z, w)$ by $x = r \cos\theta$, $y = r \sin\theta$, $z=z$, and $w=w,$ with $r>0$, obtaining 
\begin{equation}\label{syst:theta}
\begin{aligned}
\dot r =& -\frac{\varepsilon}{\sqrt{ek-c^{2}}}
\Bigg( \sqrt{ek-c^{2}}\,r(d_{1}+d_{2}\varepsilon)\cos^{2}\theta
+(c+k)wz\sin\theta \\
&\qquad\qquad -\cos\theta\Big( \sqrt{ek-c^{2}}\,wz + cr(d_{1}+d_{2}\varepsilon)\sin\theta \Big) \Bigg), \\[2mm]
\dot\theta =&\frac{1}{ 2\sqrt{ek-c^{2}}\,r } \Bigg( r \big( -2c^{2}+2ek+c\varepsilon(d_{1}+d_{2}\varepsilon) \big) - 2(c+k)wz\,\varepsilon\cos\theta \\
&\qquad\qquad + c r \varepsilon(d_{1}+d_{2}\varepsilon)\cos 2\theta + 2\sqrt{ek-c^{2}}\,\varepsilon \big( - wz+r(d_{1}+d_{2}\varepsilon)\cos\theta \big)\sin\theta \Bigg), \\[2mm]
\dot z =& \frac{\varepsilon}{k} \Big( k(w+z)(b_{1}+b_{2}\varepsilon) +c r w\cos\theta
+\sqrt{ek-c^{2}}\,r w\sin\theta \Big), \\[2mm]
\dot w =& -\frac{\varepsilon}{k} \Big( k(w-z)(a_{1}+a_{2}\varepsilon) +c r z\cos\theta +\sqrt{ek-c^{2}}\,r z\sin\theta \Big).
\end{aligned}
\end{equation}

By introducing $\theta$ as the new independent variable, we obtain 
\begin{equation}\label{syst:rzw}
\begin{aligned}
\frac{dr}{d\theta}  &=& \varepsilon F_{11}(r,z,w,\theta) +\varepsilon^{2} F_{12}(r,z,w,\theta) + \mathcal{O}(\varepsilon^{3}), \\[2mm]
\frac{dz}{d\theta} &=& \varepsilon F_{21}(r,z,w,\theta) + \varepsilon^2 F_{22}(r,z,w,\theta) + \mathcal O(\varepsilon^3),\\[2mm]
\frac{dw}{d\theta} &=& \varepsilon F_{31}(r,z,w,\theta) + \varepsilon^2 F_{32}(r,z,w,\theta) + \mathcal O(\varepsilon^3),
\end{aligned}
\end{equation}
where

\begin{equation*}
\begin{aligned}
F_{11} =& \frac{ d_{1}\sqrt{ek-c^{2}}\,r\cos^{2}\theta +(c+k)wz\sin\theta -\cos\theta\left(
\sqrt{ek-c^{2}}\,wz +cd_{1}r\sin\theta \right) }{c^{2}-ek}, \\
F_{12} =& \frac{1}{8( c^{2}-ek )^{2}r} \Bigg( 2\sqrt{ek-c^{2}} \left( c(d_{1}^{2}+2cd_{2})
-2d_{2}ek \right)r^{2} -4d_{1}(2c+k)\sqrt{ek-c^{2}}\,rwz \\
& \cos\theta +4\sqrt{ek-c^{2}} \left( \left( c(d_{1}^{2}+cd_{2}) -d_{2}ek \right)r^{2}
+2(c+k)w^{2}z^{2} \right)\cos 2\theta -8cd_{1}\sqrt{ek-c^{2}} \\
& rwz\cos 3\theta -4d_{1}k\sqrt{ek-c^{2}}\,rwz\cos 3\theta +2cd_{1}^{2}\sqrt{ek-c^{2}}\,r^{2}\cos 4\theta +4d_{1}(2c^{2}+ck \\
& -ek)\,rwz\sin\theta -2\left( \left( 2c^{2}(d_{1}^{2}+cd_{2}) -(d_{1}^{2}+2cd_{2})ek \right)r^{2} +2(2c^{2}+2ck-ek+k^{2})w^{2}z^{2} \right) \\ 
& \sin 2\theta +4d_{1}(2c^{2}+ck-ek)\,rwz\sin 3\theta +d_{1}^{2} (-2c^{2}+ek)r^{2}\sin 4\theta \Bigg),
\end{aligned}
\end{equation*}

\begin{equation*}
\begin{aligned}
F_{21} =& \frac{ b_{1}k(w+z) + crw\cos\theta + \sqrt{ek-c^{2}}\,rw\sin\theta }{k\sqrt{ek-c^{2}}}, \\
F_{22} =& -\frac{1}{4k(ek-c^{2})^{3/2}r} \Bigg( 2kr\Big( -(c+e)w^{2}z +b_{1}cd_{1}(w+z) +2b_{2}(c^{2}-ek)(w+z) \Big) \\
& +w\Big( d_{1}(2c^{2}+ek)r^{2} -4b_{1}k(c+k)wz -4b_{1}k(c+k)z^{2} \Big)\cos\theta +2r\Big( (-2c^{2}-ck+ek) \\
& w^{2}z +b_{1}cd_{1}k(w+z) \Big)\cos 2\theta +d_{1}(2c^{2}-ek)r^{2}w\cos 3\theta +2cd_{1}\sqrt{ek-c^{2}}\,r^{2}w\sin\theta \\
& -4b_{1}k\sqrt{ek-c^{2}}\,w^{2}z\sin\theta -4b_{1}k\sqrt{ek-c^{2}}\,wz^{2}\sin\theta +2b_{1}d_{1}k\sqrt{ek-c^{2}}\,rw\sin 2\theta \\
& +2b_{1}d_{1}k\sqrt{ek-c^{2}}\,rz\sin 2\theta -4c\sqrt{ek-c^{2}}\,rw^{2}z\sin 2\theta -2k\sqrt{ek-c^{2}}\,rw^{2}z\sin 2\theta \\
& +2cd_{1}\sqrt{ek-c^{2}}\,r^{2}w\sin 3\theta \Bigg),
\end{aligned}
\end{equation*}
\begin{equation*}
\begin{aligned}
F_{31} =& -\frac{ a_{1}k(w-z) + crz\cos\theta + \sqrt{ek-c^{2}}\,rz\sin\theta }{k\sqrt{ek-c^{2}}}, \\
F_{32} =& -\frac{1}{4k(ek-c^{2})^{3/2}r} \Bigg( 2kr\Big( -2a_{2}(c^{2}-ek)(w-z) +(c+e)wz^{2} +a_{1}cd_{1}(-w+z) \Big) \\
& -z\Big( 2c^{2}d_{1}r^{2} +4a_{1}ckw(-w+z) +k\big(d_{1}er^{2}+4a_{1}kw(-w+z)\big) \Big)\cos\theta +2r\Big( (2c^{2} \\
& +ck-ek)wz^{2} +a_{1}cd_{1}k(-w+z) \Big)\cos 2\theta +d_{1}(-2c^{2}+ek)r^{2}z\cos 3\theta -2cd_{1} \\
& \sqrt{ek-c^{2}}\,r^{2}z\sin\theta +4a_{1}k\sqrt{ek-c^{2}}\,w^{2}z\sin\theta -4a_{1}k\sqrt{ek-c^{2}}\,wz^{2}\sin\theta -2a_{1}d_{1}k \\
& \sqrt{ek-c^{2}}\,rw\sin 2\theta +2a_{1}d_{1}k\sqrt{ek-c^{2}}\,rz\sin 2\theta +4c\sqrt{ek-c^{2}}\,rwz^{2}\sin 2\theta +2k \\
& \sqrt{ek-c^{2}}\,rwz^{2}\sin 2\theta -2cd_{1}\sqrt{ek-c^{2}}\,r^{2}z\sin 3\theta \Bigg).
\end{aligned}
\end{equation*}

In  the notations of  Theorem~\ref{ThmAverging}, by taking $t=\theta$, $T=2\pi$, and $\textbf{x}=(r,z,w)$, we have 
\begin{equation*}
    \begin{aligned}
        F_1(r,z,w,\theta)=& (F_{11}( \textbf{x},\theta),
 F_{21}( \textbf{x},\theta), F_{31}( \textbf{x},\theta)),\\
 F_2(r,z,w,\theta)=& (F_{12}( \textbf{x},\theta),
 F_{22}( \textbf{x},\theta), F_{32}( \textbf{x},\theta)),
    \end{aligned}
\end{equation*}

so we are under the assumptions of this theorem. We calculate $f(r,z,w)$ obtaining
\begin{equation*}
\begin{aligned}
f(r,z,w)
&=\frac{1}{2\pi}\int_0^{2\pi}
F_1(r,z,w,\theta)\,d\theta  \\
&=
\frac{1}{2\pi\sqrt{ek-c^2}}
\left(
-\frac{d_1 r}{2},
\, b_1(w+z),
\, a_1(z-w)
\right).
\end{aligned}
\end{equation*}

 We look for the zeros $(r^{*},z^{*}, w^{*})$ of $f(r,z,w)$ with $r>0$. Since the unique zero of $f(r,z,w)$ is $(0,0,0)$ (or a continuum of zeros whenever $d_1=0$, $b_1$ or $a_1=0$), the first-order averaged function provides no information about the periodic solutions of system~\eqref{syst:rzw}.
In particular we cannot apply the averaging of first order. We pass then to the averaging of second order, assuming $f \equiv 0$. This makes $a_1 = b_1 = d_1 = 0$ and simplifies $F_2$ to $F_2(r, z, w, \theta) = \Big( \widetilde{F_{21}}, \widetilde{F_{22}}, \widetilde{F_{23}} \Big)$, with
\begin{equation*}
    \begin{aligned}
\widetilde{F_{21}} =& -\dfrac{1}{2\left(c^{2}-ek\right)^{2}r} \Bigg( d_{2}\left(ek-c^{2}\right)^{3/2}r^{2} -\sqrt{ek-c^{2}} \left( d_{2}\left(c^{2}-ek\right)r^{2}
+2(c+k)w^{2}z^{2} \right) \\
& \cos 2\theta +\left( cd_{2}\left(c^{2}-ek\right)r^{2}
+\left(2c^{2}+2ck-ek+k^{2}\right)w^{2}z^{2} \right)\sin 2\theta \Bigg), \\[2mm]  
\widetilde{F_{22}} =& - \dfrac{1}{2k\left(ek-c^{2}\right)^{3/2}} \Bigg( -\left(c+e\right)kw^{2}z +2b_{2}k\left(c^{2}-ek\right)(w+z) -w^{2}z \\
& \left( \left(2c^{2}+ck-ek\right)\cos 2\theta +\left(2c+k\right)\sqrt{ek-c^{2}}\sin 2\theta \right) \Bigg), \\[2mm] 
\widetilde{F_{23}} =& - \dfrac{1}{2k\left(ek-c^{2}\right)^{3/2}} \Bigg( k\left(
-2a_{2}\left(c^{2}-ek\right)(w-z) +\left(c+e\right)wz^{2} \right) +wz^{2} \\
& \left( \left(2c^{2}+ck-ek\right)\cos 2\theta +\left(2c+k\right)\sqrt{ek- c^{2}}\sin 2\theta \right) \Bigg).
    \end{aligned}
\end{equation*}
We now compute $g(r,z,w)$, which is required to apply Theorem~\ref{ThmAverging} at second order:
\begin{equation*}
 \begin{aligned}
g(r,z,w) =& \frac{1}{2\pi} \int_{0}^{2\pi} \left( D_{(r,z,w)} F_{1}(r,z,w,\theta) \int_{0}^{\theta} F_{1}(r,z,w,s)\,ds + F_{2}(r,z,w,\theta) \right) d\theta \\[2mm]
=& \Big( g_1(r,z,w), g_2(r,z,w), g_3(r,z,w) \Big),
 \end{aligned}
\end{equation*}
with
\begin{equation*}
    \begin{aligned}
g_1(r,z,w)=& \dfrac{r\left(c^{2}d_{2}-e\left(d_{2}k+w^{2}-z^{2}\right) +c\left(- w^{2}+z^{2}\right)\right)}{2\left(ek-c^{2}\right)^{3/2}}, \\[3mm]
g_2(r,z,w)=&-\dfrac{ -(c+e)w^{2}z+b_{2}(c^{2}-ek)(w+z) }{\left(ek-c^{2}\right)^{3/2}},\\[3mm]
g_3(r,z,w)=&-\dfrac{ -a_{2}(c^{2}-ek)(w-z)+(c+e)wz^{2} }{\left(ek-c^{2}\right)^{3/2}},
    \end{aligned}
\end{equation*}
and solving the nonlinear system given by $g(r,z,w) = 0$ we can conclude that the system has the next four solutions. 
\begin{equation*}
\begin{aligned}
S_0 = (0,0,0), \quad 
S_{1,2} = \left( 0,\, \pm z_0, \, \pm w_0 \right), \quad
S_{3,4} = \left( 0,\, \pm \widehat{z}_0, \, \pm \widehat{w}_0 \right),
\end{aligned}
\end{equation*}
where $z_0=\sqrt{\frac{a_2(a_2+3b_2+\sqrt{\Delta_2})(c^2-ek)}{2b_2(c+e)}}$, $w_0=\frac{a_2+b_2-\sqrt{\Delta}}{2a_2} z_0$, $\widehat{z}_0=\sqrt{\frac{a_2(a_2+3b_2-\sqrt{\Delta_2})(c^2-ek)}{2b_2(c+e)}}$, $\widehat{w}_0=\frac{a_2+b_2+\sqrt{\Delta_2}}{2a_2} \widehat{z}_0$, and $\Delta_2=a_2^2+6a_2b_2+b_2^2$. Note that $S_{1,2}$ and $S_{3,4}$ exists only if conditions $\Delta_2>0$ and  $ a_2b_2(c+e)<0$  are satisfied.

The solution $s_0$ corresponds to the equilibrium point and consequently it does not provide any periodic orbit. 
But the other four solutions provide four isolated periodic orbits $
(r_k(\theta,\varepsilon),
 z_k(\theta,\varepsilon),
 w_k(\theta,\varepsilon)),
$ for $ k=1,2,3,4$, i.e.,  by Theorem \ref{ThmAverging} there exist four limit cycles, for the differential system \eqref{syst:rzw} because,   for the solution $S_{1,2}$ the Jacobian matrix of $g$ evaluated at this point is given by
$$
Dg(S_{1,2})
=
\begin{pmatrix}
\frac{
- a_2^3
+ b_2^2(b_2 - \sqrt{\Delta_2})
- a_2^2(3 b_2 + \sqrt{\Delta_2})
+ a_2 b_2 (3 b_2 - 2 d_2)
}{
4 a_2 b_2 \sqrt{ ek-c^2}
}
& 0 & 0 \\[10pt]

0 &
-\frac{
b_2(a_2 + b_2 - \sqrt{\Delta_2})
}{
2 a_2 \sqrt{ ek-c^2}
}
&
\frac{
a_2 + \sqrt{\Delta_2}
}{
\sqrt{ek-c^2}
}
\\[10pt]

0 &
\frac{
b_2 - \sqrt{\Delta_2}
}{
\sqrt{ek-c^2}
}
&
\frac{
a_2(a_2 + b_2 + \sqrt{\Delta_2})
}{
2 b_2 \sqrt{ek-c^2}
}
\end{pmatrix}
$$

and its determinant is

    \begin{equation*}
        \begin{aligned}
                    \det \Big( \frac{\partial g}{\partial \mathbf{x}} (S_{1,2}) \Big) 
            =& -\frac{ 1 }{ 4a_2b_2(ek-c^2)^{3/2} } \left( a_2^2+b_2\bigl(b_2-\sqrt{\Delta_2}\bigr) +a_2\bigl(6b_2+\sqrt{\Delta_2}\bigr) \right) \\
            & \quad \left( a_2^3+b_2^2\bigl(-b_2+ \sqrt{\Delta_2}\bigr) +a_2^2\bigl(3b_2+\sqrt{\Delta_2}\bigr) +a_2b_2(-3b_2+2d_2) \right) \neq 0,
        \end{aligned}
    \end{equation*}

by assumptions.

Analogously, for the equilibria \(S_3\) and \(S_4\) the structure of the Jacobian matrices at \(S_{3,4}\)  is similar to that of \(S_{1,2}\) , differing only by sign changes in several terms involving $(\sqrt{\Delta_2})$. As a consequence, the characteristic polynomials associated with
\(\left( \frac{\partial g}{\partial \mathbf{x}} \right)(S_{3,4})\)
have the same algebraic structure as those previously obtained for \(S_{1,2}\), with analogous modifications in the coefficients. In particular, the determinant of the Jacobian at \(S_{3,4}\) is given by
\begin{equation*}
\begin{aligned}
\det \Big( \frac{\partial g}{\partial \mathbf{x}} (S_{3,4}) \Big)
=& -\frac{1}{4a_2 b_2 (ek - c^2)^{3/2}}
\Bigl(
a_2^2 - a_2(-6b_2 + \sqrt{\Delta_2}) + b_2(b_2 + \sqrt{\Delta_2})
\Bigr) \\
& \quad 
\Bigl(
a_2^3 - a_2^2(-3b_2 + \sqrt{\Delta_2})
- b_2^2(b_2 + \sqrt{\Delta_2})
+ a_2 b_2(-3b_2 + 2d_2)
\Bigr)
\neq 0.
\end{aligned}
\end{equation*}

Moreover, the characteristic polynomials of $
\left( \frac{\partial g}{\partial \mathbf{x}} \right)(S_{1,2})
$  coincide and are
given by
\begin{equation*}\label{polySolu}
    p(\lambda)=\lambda^3 + m_1\lambda^2 + m_2\lambda + m_3,
\end{equation*}
where the coefficients \(m_1\), \(m_2\), and \(m_3\) are given by
\begin{equation*}
\begin{aligned}
m_1 =& -\frac{1}{4a_2b_2\sqrt{ek-c^2}}
\Big( (a_2^2+b_2^2)(\sqrt{\Delta_2}-b_2) + a_2^3 + a_2b_2(b_2-2d_2) \Big), \\[6pt]
m_2 =& \frac{1}{4a_2^2b_2^2(c^2-ek)}
\Big(
a_2^6 + b_2^5(b_2-\sqrt{\Delta_2}) + a_2^5(5b_2+\sqrt{\Delta_2})
- a_2^2b_2^3(3b_2-5\sqrt{\Delta_2}+d_2) \\
& \quad + a_2^4b_2(-3b_2+2\sqrt{\Delta_2}+d_2)
+ a_2^3b_2(-22b_2^2+\sqrt{\Delta_2}d_2+b_2(-5\sqrt{\Delta_2}+d_2)) \\
& \quad + a_2b_2^3(5b_2^2+\sqrt{\Delta_2}d_2-b_2(2\sqrt{\Delta_2}+d_2))
\Big), \\[6pt]
m_3 =& \frac{1}{4a_2b_2(ek-c^2)^{3/2}}
\Big(
a_2^2 + b_2(b_2-\sqrt{\Delta_2}) + a_2(6b_2+\sqrt{\Delta_2})
\Big) \\
& \quad \Big(
a_2^3 + b_2^2(-b_2+\sqrt{\Delta_2})
+ a_2^2(3b_2+\sqrt{\Delta_2})
+ a_2b_2(-3b_2+2d_2)
\Big).
\end{aligned}
\end{equation*}

The parameters \(a_2, b_2, c_2, d_2, e,\) and \(k\) can be chosen such that the polynomial \(p(\lambda)\) has roots with real parts of arbitrary signs. Consequently, the periodic orbit bifurcating from \(S_1\) (or \(S_2\)) may be locally stable if all eigenvalues of \(p(\lambda)\) have negative real parts. According to the Routh--Hurwitz criterion (see ~\cite{gantmacher2005applications}, p.~231), this occurs under the conditions $m_1>0$, $m_2>0$, $D_2>0$, $m_3>0$, where
\begin{equation*}
\begin{aligned}
D_2 =& -\frac{1}{8a_2^3b_2^3(ek-c^2)^{3/2}}
\Big(
- a_2^9 + 3a_2^7b_2(b_2-\sqrt{\Delta_2}) + b_2^8(b_2-\sqrt{\Delta_2})
- 3a_2b_2^7(-2b_2+\sqrt{\Delta_2}) \\
& \quad - a_2^8(6b_2+\sqrt{\Delta_2})
+ a_2^6b_2^2(43b_2+8\sqrt{\Delta_2}+2d_2)
- _2^3b_2^4\bigl(43b_2^2-19b_2\sqrt{\Delta_2}+d_2(-2\sqrt{\Delta_2}  \\
& \quad +d_2)\bigr) + a_2^4b_2^2\bigl(-3b_2^3+2b_2^2(\sqrt{\Delta_2}-2d_2)
+\sqrt{\Delta_2}d_2^2+b_2d_2(-2\sqrt{\Delta_2}+d_2)\bigr) \\
& \quad + a_2^5b_2^2\bigl(3b_2^2+19b_2\sqrt{\Delta_2}+d_2(2\sqrt{\Delta_2}+d_2)\bigr) \\
& \quad + a_2^2b_2^4\bigl(-3b_2^3+\sqrt{\Delta_2}d_2^2-b_2d_2(2\sqrt{\Delta_2}+d_2)+2b_2^2(4\sqrt{\Delta_2}+d_2)\bigr)
\Big).
\end{aligned}
\end{equation*}

On the other hand, if at least one eigenvalue of \(p(\lambda)\) has positive real part, then the periodic orbit is locally unstable.

Analogously, the stability analysis for \(S_3\) and \(S_4\) follows the same Routh--Hurwitz framework as for \(S_1\) and \(S_2\). In particular, since the characteristic polynomials associated with \(S_3\) and \(S_4\) have the same algebraic structure (up to sign changes in the terms involving \(\sqrt{\Delta_2}\)), one can derive analogous conditions on the corresponding coefficients to determine the local stability or instability of the associated periodic orbits.

Again by Theorem~\ref{ThmAverging} these four limit cycles satisfy
\begin{align*}
(r_k(0,\varepsilon), z_k(0,\varepsilon),
 w_k(0,\varepsilon))
&=(0,z_k,w_k)+\mathcal{O}(\varepsilon).
\end{align*}
These four limit cycles in the differential system~\eqref{syst:theta} are the limit cycles
$$
(r_k(t,\varepsilon),\theta_k(t,\varepsilon),
 z_k(t,\varepsilon),
 w_k(t,\varepsilon)),\qquad k=1,2,3,4,
$$
such that
\begin{align*}
(r_k(0,\varepsilon),\theta_k(0,\varepsilon),
 z_k(0,\varepsilon), w_k(0,\varepsilon))
&=
\left(
0,\sqrt{ek-c^2}\,t,z_k,w_k\right)+\mathcal{O}(\varepsilon).
\end{align*}

And in the differential system~\eqref{sist-novo} are the limit cycles
\[
(X_k(t,\varepsilon),Y_k(t,\varepsilon),
Z_k(t,\varepsilon),
W_k(t,\varepsilon)),
\qquad k=1,2,3,4
\]
such that
\begin{align*}
(X_k(t,\varepsilon),Y_k(t,\varepsilon),
Z_k(t,\varepsilon), W_k(t,\varepsilon))
&=
(0,0,z_k, w_k)
+\mathcal{O}(\varepsilon).
\end{align*}

Finally, taking into account the change of variables~\eqref{sist-rescal} we obtain the two
limit cycles
$$
(x_k(t,\varepsilon),y_k(t,\varepsilon),z_k(t,\varepsilon),w_k(t,\varepsilon)),
\qquad k=1,2,3,4
$$
of the differential system~\eqref{eq:sistema} satisfying
\begin{align*}
(x_1(0,\varepsilon),y_1(0,\varepsilon),z_1(0,\varepsilon),w_1(0,\varepsilon))
&=(
\varepsilon w_k, \varepsilon z_k, 0, 0) +\mathcal{O}(\varepsilon^2).
\end{align*}
From these last expressions, it is clear that these four limit cycles bifurcate from the equilibrium point $E_0$ when $\varepsilon=0$.
This completes the proof of Theorem~\ref{teor2}.
\end{proof}


\section*{Acknowlegements}
The first author acknowledges partial support from the 2026 Postdoctoral Program at the Instituto de Matemática Pura e Aplicada (IMPA) and from a scholarship granted by the Fundação Arthur Bernardes (FUNARBE).
The second author acknowledges partial support from CNPq under grant No.~169201/2023-6.


%
\bibliographystyle{acm}
\bibliography{sample}

@article{zhang2025complex,
  title={Complex bifurcations and new types of structure uncovered in the Qi system},
  author={Zhang, Enrui and Li, Xianyi},
  journal={Mathematics and Computers in Simulation},
  year={2025},
  publisher={Elsevier}
}

@article{llibre2021zero,
  title={The zero-Hopf bifurcations of a four-dimensional hyperchaotic system},
  author={Llibre, Jaume and Tian, Yuzhou},
  journal={Journal of Mathematical Physics},
  volume={62},
  number={5},
  year={2021},
  publisher={AIP Publishing}
}

@article{diab2025zero,
  title={Zero-Hopf bifurcation of a 5D hyperchaotic quadratic polynomial differential systems},
  author={Diab, Zouhair and Guirao, Juan LG and Llibre, Jaume},
  journal={Mathematics and Computers in Simulation},
  volume={238},
  pages={383--387},
  year={2025},
  publisher={Elsevier}
}

@article{SuarezRenteria2024,
  author  = {Su{\'a}rez Navarro, Pedro Iv{\'a}n and Renteria Alva, Sonia Isabel},
  title   = {Four-dimensional zero-Hopf bifurcation for a Lorenz--Haken system},
  journal = {Matem{\'a}tica Contempor{\^a}nea},
  year    = {2024},
  volume  = {50},
  pages   = {1--18}
}

@article{DiabGuiraoVera2021,
  author  = {Diab, Zouhair and Guirao, Juan L. G. and Vera, Juan A.},
  title   = {Zero-Hopf bifurcation in a generalized Genesio differential equation},
  journal = {Mathematics},
  volume  = {9},
  number  = {4},
  pages   = {354},
  year    = {2021}
}

@article{LlibreMakhlouf2016,
  author  = {Llibre, Jaume and Makhlouf, Abdelghani},
  title   = {Zero-Hopf bifurcation in the generalized Michelson system},
  journal = {Chaos, Solitons \& Fractals},
  volume   = {89},
  pages    = {228--231},
  year     = {2016}
}

@article{LlibreMessiasReinol2020,
  author  = {Llibre, Jaume and Messias, Marcelo and de Carvalho Reinol, Andr{\'e}},
  title   = {Zero-Hopf bifurcations in three-dimensional chaotic systems with one stable equilibrium},
  journal = {International Journal of Bifurcation and Chaos},
  volume   = {30},
  number   = {13},
  pages    = {2050189},
  year     = {2020}
}

@article{DiabBustosLopezMartinez2023,
  author  = {Diab, Zouhair and de Bustos, Mar{\'i}a T. and L{\'o}pez, Mar{\'i}a A. and Mart{\'i}nez, Rafael},
  title   = {The zero-Hopf bifurcations of a new hyperchaotic system},
  journal = {Applied Mathematics and Nonlinear Sciences},
  volume   = {8},
  number   = {2},
  pages    = {2251--2260},
  year     = {2023}
}

@article{LlibreCandido2018HyperLorenz,
  author  = {Llibre, Jaume and C{\^a}ndido, M. R.},
  title   = {Zero-Hopf bifurcations in a hyperchaotic Lorenz system II},
  journal = {International Journal of Nonlinear Sciences},
  volume   = {25},
  pages    = {3--26},
  year     = {2018}
}

@book{kuznetsov1998elements,
  title={Elements of applied bifurcation theory},
  author={Kuznetsov, Yuri A},
  year={1998},
  publisher={Springer}
}

@book{guckenheimer2013nonlinear,
  title={Nonlinear oscillations, dynamical systems, and bifurcations of vector fields},
  author={Guckenheimer, John and Holmes, Philip},
  year={2013},
  publisher={Springer Science \& Business Media}
}

@article{qi2005analysis,
  title={Analysis of a new chaotic system},
  author={Qi, Guoyuan and Chen, Guanrong and Du, Shengzhi and Chen, Zengqiang and Yuan, Zhuzhi},
  journal={Physica A: Statistical Mechanics and its Applications},
  volume={352},
  number={2-4},
  pages={295--308},
  year={2005},
  publisher={Elsevier}
}

@article{qi2009new,
  title={A new hyperchaotic system and its circuit implementation},
  author={Qi, Guoyuan and van Wyk, Michael Antonie and van Wyk, Barend Jacobus and Chen, Guanrong},
  journal={Chaos, Solitons \& Fractals},
  volume={40},
  number={5},
  pages={2544--2549},
  year={2009},
  publisher={Elsevier}
}

@article{qi2008new,
  title={On a new hyperchaotic system},
  author={Qi, Guoyuan and van Wyk, Micha{\"e}l Antonie and van Wyk, Barend Jacobus and Chen, Guanrong},
  journal={Physics Letters A},
  volume={372},
  number={2},
  pages={124--136},
  year={2008},
  publisher={Elsevier}
}

@article{buicua2004averaging,
  title={Averaging methods for finding periodic orbits via Brouwer degree},
  author={Buic{\u{a}}, Adriana and Llibre, Jaume},
  journal={Bulletin des sciences mathematiques},
  volume={128},
  number={1},
  pages={7--22},
  year={2004},
  publisher={Elsevier}
}

@book{Verhulst1996,
  author    = {Ferdinand Verhulst},
  title     = {Nonlinear Differential Equations and Dynamical Systems},
  edition    = {2},
  series     = {Universitext},
  publisher  = {Springer-Verlag},
  address    = {Berlin},
  year       = {1996},
  note        = {Translated from the 1985 Dutch original},
}

@article{candido2017persistence,
  title={Persistence of periodic solutions for higher order perturbed differential systems via Lyapunov--Schmidt reduction},
  author={C{\^a}ndido, Murilo R and Llibre, Jaume and Novaes, Douglas D},
  journal={Nonlinearity},
  volume={30},
  number={9},
  pages={3560--3586},
  year={2017},
  publisher={IOP Publishing}
}

@book{gantmacher2005applications,
  title={Applications of the Theory of Matrices},
  author={Gantmacher, Feliks Rouminovich and Brenner, Joel Lee},
  year={2005},
  publisher={Courier Corporation}
}

\end{document}